\documentclass[12pt,a4paper]{amsart}
\allowdisplaybreaks[1]

\usepackage[top=30truemm,bottom=30truemm,left=25truemm,right=25truemm]{geometry}
\usepackage{amstext,amsthm,amssymb}

\DeclareFontEncoding{OT2}{}{}
\DeclareFontSubstitution{OT2}{cmr}{m}{l}

\DeclareFontFamily{OT2}{cmr}{\hyphenchar\font45 }
\DeclareFontShape{OT2}{cmr}{m}{l}{%
<5><6><7><8><9>gen*wncyr%
<10><10.95><12><14.4><17.28><20.74><24.88>wncyr10}{}

\DeclareMathAlphabet{\mathcyr}{OT2}{cmr}{m}{l}
\DeclareMathAlphabet{\mathcyb}{OT2}{cmr}{b}{l}
\SetMathAlphabet{\mathcyr}{bold}{OT2}{cmr}{b}{l}

\newtheorem{thm}{Theorem}[section]
\newtheorem{lem}[thm]{Lemma}

\newcommand{\sha}{\mathbin{\widetilde{\mathcyr{sh}}}}
\newcommand{\sh}{\mathbin{\mathcyr{sh}}}

%%%%%%%%%%%%%%%%%%%%%%%%%%%%%%%%%%%%%%%%%%%%%%%%%%%%%%%%%%%%%%%%%%%%%%%%%%%%%%%%%%%%%%%%%%%
\begin{document}

\title{Bowman-Bradley type theorem for finite multiple zeta values in $\mathcal{A}_2$}

\author{Hideki Murahara}
\address[Hideki Murahara]{Nakamura Gakuen University Graduate School, 5-7-1, Befu, Jonan-ku,
Fukuoka, 814-0198, Japan}
\email{hmurahara@nakamura-u.ac.jp}

\author{Tomokazu Onozuka}
\address[Tomokazu Onozuka]{Multiple Zeta Research Center, Kyushu University 744, Motooka, Nishi-ku,
Fukuoka, 819-0395, Japan}
\email{t-onozuka@math.kyushu-u.ac.jp}

\author{Shin-ichiro Seki}
\address[Shin-ichiro Seki]{Mathematical Institute, Tohoku University, 6-3, Aoba, Aramaki, Aoba-Ku, Sendai, 980-8578, Japan}
\email{shinichiro.seki.b3@tohoku.ac.jp}

\thanks{The third author is supported in part by the Grant-in-Aid for JSPS Fellows (JP18J00151), The Ministry of Education, Culture, Sports, Science and Technology, Japan.}

\subjclass[2010]{Primary 11M32}
\keywords{Multiple zeta values, finite multiple zeta values, Bernoulli number, Bowman-Bradley's theorem, super congruences}

\begin{abstract}
Bowman and Bradley obtained a remarkable formula among multiple zeta values. 
The formula states that the sum of multiple zeta values for indices which consist of the shuffle of two kinds of the strings
$\{1,3,\ldots,1,3\}$ and $\{2,\ldots,2\}$ is a rational multiple of a power of $\pi^2$.
Recently, Saito and Wakabayashi proved that analogous but more general sums of finite multiple zeta values in an adelic ring $\mathcal{A}_1$ vanish.
In this paper, we partially lift Saito-Wakabayashi's theorem from $\mathcal{A}_1$ to $\mathcal{A}_2$. Our result states that a Bowman-Bradley type sum of finite multiple zeta values in $\mathcal{A}_2$ is a rational multiple of a special element and this is closer to the original Bowman-Bradley theorem. 
\end{abstract}

\maketitle

%%%%%%%%%%%%%%%%%%%%%%%%%%%%%%%%%%%%%%%%%%%%%%%%%%%%%%%%%%%%%%%%%%%%%%%%%%%%%%%%%%%%%%%%%%%%%%%%%%%%%%
\section{Introduction}
For positive integers $k_1,\dots,k_r$ with $k_r\ge 2$, the multiple zeta values (MZVs) and the multiple zeta-star values (MZSVs) are defined by 
\begin{align*}
\zeta(k_1,\dots, k_r)&:=\sum_{1\le n_1<\cdots <n_r} \frac{1}{n_1^{k_1}\cdots n_r^{k_r}}, \\
\zeta^{\star}(k_1,\dots, k_r)&:=\sum_{1\le n_1\le \cdots \le n_r} \frac {1}{n_1^{k_1}\cdots n_r^{k_r}}. 
\end{align*} 
By convention, we set $\zeta(\varnothing)=\zeta^{\star}(\varnothing)=1$ for the empty index. Let $\{a_1,\ldots,a_l\}^m$ denote the $m$-times repetition of $a_1,\ldots,a_l$, e.g. $\{2\}^2=2,2$ and $\{1,3\}^2=1,3,1,3$. For MZVs, Bowman and Bradley \cite{BB02} established the following result:
\begin{thm}[{Bowman-Bradley \cite[Corollary 5.1]{BB02}}]
For non-negative integers $l$ and $m$, we have
\begin{align*}
&\sum_{\substack{ m_0+\cdots+m_{2l}=m \\ m_i\ge0\,(0\le i\le 2l) }}
\zeta (\{2\}^{m_0},1,\{2\}^{m_1},3,\{2\}^{m_2},\ldots,\{2\}^{m_{2l-2}},1,\{2\}^{m_{2l-1}},3,\{2\}^{m_{2l}})  \\
&=\binom{2l+m}{2l} \frac{\pi^{4l+2m}}{(2l+1)\cdot(4l+2m+1)!}.
\end{align*}
\end{thm}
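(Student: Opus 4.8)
The plan is to collect the whole family of identities into one bivariate generating function and to prove a single closed-form evaluation of it, after which the stated formula drops out by extracting one Taylor coefficient. Write $S_{l,m}$ for the left-hand side of the theorem and set
\[
\Phi(x,y):=\sum_{l,m\ge 0}S_{l,m}\,x^{4l}y^{2m}.
\]
Every index occurring in $S_{l,m}$ has weight $4l+2m$, so $\Phi$ is weight-homogeneous with $x$ marking each $1,3$ block and $y$ marking each $2$. I claim that the theorem is equivalent to the evaluation
\[
\Phi(x,y)=\frac{\cosh\!\bigl(\pi\sqrt{x^2+y^2}\,\bigr)-\cosh\!\bigl(\pi\sqrt{y^2-x^2}\,\bigr)}{\pi^2 x^2}.
\]
Two specializations both fix the shape of the answer and serve as internal consistency checks: at $x=0$ the right-hand side degenerates to $\frac{\sinh(\pi y)}{\pi y}=\sum_{m}\zeta(\{2\}^m)y^{2m}$, while at $y=0$ it becomes $\frac{\cosh(\pi x)-\cos(\pi x)}{(\pi x)^2}=\sum_{l}\zeta(\{1,3\}^l)x^{4l}$; these are precisely the $l=0$ and $m=0$ slices of the theorem, so recovering them is built into the argument rather than assumed.

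To prove the evaluation I would work with the multiple polylogarithms $\mathrm{Li}_{\mathbf{k}}(t)=\sum_{0<n_1<\cdots<n_r}t^{n_r}/(n_1^{k_1}\cdots n_r^{k_r})$, for which $\mathrm{Li}_{\mathbf{k}}(1)=\zeta(\mathbf{k})$. For fixed $l$, summing over the compositions $m_0+\cdots+m_{2l}=m$ and over $m$ against the common weight $y$ is exactly the operation of interleaving the $\{2\}$-block generating series into each of the $2l+1$ gaps of $\{1,3\}^l$. The analytic engine is the generating function $A(t):=\sum_{k\ge0}y^{2k}\mathrm{Li}_{\{2\}^k}(t)$, which satisfies the second-order equation $tA''+A'-\frac{y^2}{1-t}A=0$ with $A(0)=1$ and $A(1)=\frac{\sinh(\pi y)}{\pi y}$; equivalently, the $\{2\}$-insertions are governed by a rank-two local system on $\mathbb{P}^1\setminus\{0,1,\infty\}$. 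Interleaving, between consecutive copies of this propagator, the integral operators attached to the letters $1$ and $3$, and then summing the resulting $l$-fold iteration against $x^4$, deforms the operator into one whose two characteristic frequencies at $t=1$ I expect to be $\sqrt{y^2+x^2}$ and $\sqrt{y^2-x^2}$: switching on $y$ shifts the squared frequencies $x^2$ and $-x^2$ of the pure $\{1,3\}$-system (which produce $\cosh(\pi x)$ and $\cos(\pi x)$) uniformly by $y^2$, and that shift is the entire content of the closed form.

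Granting the evaluation, the theorem follows by elementary extraction. Expanding $\cosh(\pi\sqrt{w})=\sum_{k\ge0}\frac{\pi^{2k}}{(2k)!}w^k$ with $w=x^2\pm y^2$ and subtracting, all even powers of $x$ cancel and one obtains
\[
\cosh\!\bigl(\pi\sqrt{x^2+y^2}\,\bigr)-\cosh\!\bigl(\pi\sqrt{y^2-x^2}\,\bigr)=2\sum_{k\ge0}\frac{\pi^{2k}}{(2k)!}\sum_{j\ \mathrm{odd}}\binom{k}{j}x^{2j}y^{2(k-j)}.
\]
Dividing by $\pi^2x^2$ and reading off the coefficient of $x^{4l}y^{2m}$ forces $j=2l+1$ and $k=2l+m+1$, so that
\[
S_{l,m}=\frac{2}{\pi^2}\cdot\frac{\pi^{2(2l+m+1)}}{(4l+2m+2)!}\binom{2l+m+1}{2l+1}.
\]
Finally $(4l+2m+2)!=2(2l+m+1)(4l+2m+1)!$ and $\binom{2l+m+1}{2l+1}/(2l+m+1)=\binom{2l+m}{2l}/(2l+1)$, which rewrite the right-hand side as $\binom{2l+m}{2l}\frac{\pi^{4l+2m}}{(2l+1)(4l+2m+1)!}$, exactly the claim.

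The main obstacle is the evaluation itself: proving that the $\{2\}$-insertions and the $l$-fold $\{1,3\}$-iteration assemble into that difference of two hyperbolic cosines. The difficulty is twofold. First, insertions in neighbouring gaps interact through the shuffle product, so the summation cannot be performed gap-by-gap and must instead be organized through the rank-two local system, where diagonalizing the associated monodromy is what should produce the two frequencies $\sqrt{y^2\pm x^2}$. Second, one must control the regularization at the singular endpoint $t=1$ so that the monodromy computation returns honest convergent multiple zeta values rather than divergent boundary terms. Everything downstream of the evaluation—the boundary checks and the coefficient extraction—is routine. A purely algebraic alternative is to derive the evaluation inside the shuffle algebra directly from the Bowman--Bradley shuffle identities; I expect the bookkeeping to be of comparable difficulty, with the local-system picture making the origin of the two frequencies the most transparent.
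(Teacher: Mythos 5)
You are trying to prove a statement that the paper itself only quotes from \cite{BB02} without proof, so there is no internal argument to compare against; I can only assess your proposal on its own terms. Your reduction of the theorem to the generating-function evaluation
\[
\Phi(x,y)=\sum_{l,m\ge 0}S_{l,m}\,x^{4l}y^{2m}
=\frac{\cosh\bigl(\pi\sqrt{x^2+y^2}\bigr)-\cosh\bigl(\pi\sqrt{y^2-x^2}\bigr)}{\pi^2 x^2}
\]
is correct: I checked the coefficient extraction, the identity $(4l+2m+2)!=2(2l+m+1)(4l+2m+1)!$, the binomial rewriting, and the two boundary specializations $\sum_m\zeta(\{2\}^m)y^{2m}=\sinh(\pi y)/(\pi y)$ and $\sum_l\zeta(\{1,3\}^l)x^{4l}=(\cosh(\pi x)-\cos(\pi x))/(\pi x)^2$, and all of these are right. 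The differential equation $tA''+A'-\frac{y^2}{1-t}A=0$ for the $\{2\}$-propagator is also correct.

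The genuine gap is the evaluation of $\Phi$ itself, and you have named it yourself without closing it. The assertion that interleaving the $1$- and $3$-operators into the rank-two $\{2\}$-system ``deforms the operator into one whose two characteristic frequencies at $t=1$ I expect to be $\sqrt{y^2+x^2}$ and $\sqrt{y^2-x^2}$'' is a heuristic, not an argument: no deformed operator is written down, no diagonalization is performed, and no regularization at $t=1$ is carried out. The boundary checks at $x=0$ and $y=0$ only pin down the $l=0$ and $m=0$ slices and place no constraint on the mixed coefficients $S_{l,m}$ with $l,m\ge 1$, which are the entire content of the theorem. Moreover, the difficulty you flag --- that insertions of $2$'s into neighbouring gaps of $\{1,3\}^l$ interact through the shuffle product, so the sum cannot be performed gap-by-gap --- is precisely the combinatorial heart of Bowman and Bradley's proof (their shuffle-algebra identity reducing the interleaved sum to powers of the single letter corresponding to $\{2\}$); an analogous reduction in this paper is the role played by Lemma \ref{muneta}, quoted from Muneta. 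Until either that shuffle-algebra identity or the monodromy computation is actually supplied, the proposal establishes the equivalence of the theorem with an unproven closed form, not the theorem.
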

A similar result for MZSVs is known by Kondo-Saito-Tanaka \cite{KST12} and Yamamoto \cite{Yam13}, i.e. the similar sum for MZSVs is also a rational multiple of $\pi^{4l+2m}$.

Let us consider counterparts of these results for finite multiple zeta values. For a positive integer $n$, we define the $\mathbb{Q}$-algebra $\mathcal{A}_{n}$ by
\[
\mathcal{A}_{n}
:=\biggl(\prod_{p}\mathbb{Z}/p^{n}\mathbb{Z}\biggr)\,\bigg/\, \biggl(\bigoplus_{p}\mathbb{Z}/p^{n}\mathbb{Z\biggr)},
\]
where $p$ runs over prime numbers. For positive integers $k_{1},\dots,k_{r}$
and $n$, the finite multiple zeta values (FMZVs) and the finite multiple zeta-star values (FMZSVs) in $\mathcal{A}_n$ are defined by 
\begin{align*}
\zeta_{\mathcal{A}_{n}}(k_{1},\dots,k_{r}) & :=\biggl(\sum_{1\le n_{1}<\cdots<n_{r}\le p-1}\frac{1}{n_{1}^{k_{1}}\cdots n_{r}^{k_{r}}}\bmod p^{n}\biggr)_{p}\in\mathcal{A}_{n}, \\
\zeta_{\mathcal{A}_{n}}^{\star}(k_{1},\dots,k_{r}) & :=\biggl(\sum_{1\le n_{1}\le \cdots\le n_{r}\le p-1}\frac{1}{n_{1}^{k_{1}}\cdots n_{r}^{k_{r}}}\bmod p^{n}\biggr)_{p}\in\mathcal{A}_{n}.
\end{align*}
We set $\zeta_{\mathcal{A}_n}(\varnothing)=\zeta_{\mathcal{A}_n}^{\star}(\varnothing)=1$. For details, see Rosen \cite{Ros15} and Seki \cite{Sek16}. Recently, Saito and Wakabayashi \cite{SW16} obtained Bowman-Bradley type results in a strong sense for finite multiple zeta values in $\mathcal{A}_1$. The following is a part of their results:
\begin{thm}[{Saito-Wakabayashi \cite[Theorem 1.4]{SW16}}] \label{SWthm}
Let $a$ and $b$ be odd positive integers and $c$ an even positive integer. For non-negative integers $l$ and $m$ with $(l,m)\neq(0,0)$, we have
\begin{align*}
&\sum_{\substack{ m_0+\cdots+m_{2l}=m \\ m_i\ge0\,(0\le i\le 2l) }}
\zeta_{\mathcal{A}_{1}} (\{c\}^{m_0},a,\{c\}^{m_1},b,\{c\}^{m_2},\ldots,\{c\}^{m_{2l-2}},a,\{c\}^{m_{2l-1}},b,\{c\}^{m_{2l}}) \\
&=\sum_{\substack{ m_0+\cdots+m_{2l}=m \\ m_i\ge0\,(0\le i\le 2l) }}
\zeta_{\mathcal{A}_{1}}^{\star} (\{c\}^{m_0},a,\{c\}^{m_1},b,\{c\}^{m_2},\ldots,\{c\}^{m_{2l-2}},a,\{c\}^{m_{2l-1}},b,\{c\}^{m_{2l}}) \\
&=0.
\end{align*}
\end{thm}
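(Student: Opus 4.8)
The plan is to package the entire family of Bowman--Bradley sums into a single non-commutative generating series and to reduce the statement to the vanishing of a small set of ``seed'' values. Work in the free $\mathbb{Q}$-algebra $\mathcal{H}=\mathbb{Q}\langle e_1,e_2,\dots\rangle$ on one letter $e_k$ per positive integer, with the linear evaluation map $Z\colon\mathcal{H}\to\mathcal{A}_1$ sending $e_{k_1}\cdots e_{k_r}$ to $\zeta_{\mathcal{A}_1}(k_1,\dots,k_r)$. Writing $P=(1-e_ct)^{-1}=\sum_{j\ge0}e_c^{\,j}t^j$ for the generating series of $c$-blocks and $A=e_aPe_bP$, one has $W:=P(1-XA)^{-1}=\sum_{l,m}X^lt^m\sum_{m_0+\cdots+m_{2l}=m}e_c^{m_0}e_ae_c^{m_1}e_b\cdots e_ae_c^{m_{2l-1}}e_be_c^{m_{2l}}$, so that $Z(W)=\sum_{l,m}S(l,m)X^lt^m$, where $S(l,m)$ is the Bowman--Bradley sum in the theorem. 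Thus the FMZV half of the statement is exactly the assertion that $Z(W)=1$, i.e. that only the $(l,m)=(0,0)$ coefficient survives.

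Two families of relations drive the argument. First is the antipode relation $\zeta_{\mathcal{A}_1}(k_1,\dots,k_r)=(-1)^{k_1+\cdots+k_r}\zeta_{\mathcal{A}_1}(k_r,\dots,k_1)$, from the substitution $n_i\mapsto p-n_{r+1-i}$; encoding it as the anti-automorphism $\rho(e_k)=(-1)^ke_k$ gives $Z\circ\rho=Z$. Since the weight $l(a+b)+mc$ is even ($a,b$ odd, $c$ even), every term is reversal-invariant, and a short computation yields $\rho(W)=P(1-Xe_bPe_aP)^{-1}$; thus reversal only interchanges $a\leftrightarrow b$ and so gives symmetry, not vanishing. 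Second, $Z$ is a homomorphism for the harmonic (quasi-shuffle) product $*$, because the truncated defining sums multiply by stuffle. Two consequences of Fermat's little theorem feed the induction: $\zeta_{\mathcal{A}_1}(k)=0$ for every $k$ (the power sum vanishes mod $p$ for $p$ large), whence by Newton's identities $\zeta_{\mathcal{A}_1}(\{c\}^{j})=0$ for all $j\ge1$; combined with reversal this gives $\zeta_{\mathcal{A}_1}(\mathrm{odd},\mathrm{odd})=0$.

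The core is a peeling induction, which to close I would run on the \emph{generalized} quantity $T(a_1,\dots,a_{2l};m):=\sum_{m_0+\cdots+m_{2l}=m}\zeta_{\mathcal{A}_1}(\{c\}^{m_0},a_1,\dots,a_{2l},\{c\}^{m_{2l}})$ for \emph{arbitrary} odd $a_1,\dots,a_{2l}$. The engine is the expansion of $\zeta_{\mathcal{A}_1}(a_1,\dots,a_{2l})*\zeta_{\mathcal{A}_1}(\{c\}^m)=0$: the terms with no merge reproduce exactly $T(a_1,\dots,a_{2l};m)$ with coefficient $1$, while each merge absorbs one $c$ into a single marker $a_i\mapsto a_i+c$, which stays odd, producing $T$-sums with strictly fewer $c$'s and again odd markers. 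Fixing $l$ and inducting on $m$, every term but the leading one vanishes, forcing $T(\dots;m)=0$. The alternation $a,b,a,b,\dots$ is irrelevant at this stage--only oddness of the markers and evenness of $c$ are used--which is precisely why enlarging to arbitrary odd markers is the correct inductive frame. The FMZSV half runs in complete parallel: $\zeta^\star_{\mathcal{A}_1}$ satisfies the same reversal relation, is a homomorphic image of the star-harmonic product, has vanishing single-index and $\{c\}$-power values, and admits the identical peeling.

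The main obstacle is the $c$-free base case $m=0$, namely $\zeta_{\mathcal{A}_1}(a_1,\dots,a_{2l})=0$ (and its star analogue) for all odd $a_i$. Here the naive induction on $l$ via $\zeta_{\mathcal{A}_1}(a_1,\dots,a_{2l-2})*\zeta_{\mathcal{A}_1}(a_{2l-1},a_{2l})=0$ fails to close: the interleaving terms are again all-odd indices of the \emph{same} length $2l$, unreachable from the hypothesis, and the merge terms introduce even entries. Controlling this is where the real work lies. I would attack it with a second generating series in $X$ over the $c$-free alternating values and look for a functional equation forced jointly by $Z\circ\rho=Z$ and the harmonic product--in the spirit of Bowman--Bradley's original shuffle-plus-hypergeometric identity, with the harmonic product and the reversal symmetry of $\mathcal{A}_1$ substituting for the shuffle relations of the classical setting. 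Once the $c$-free vanishing and the peeling step are in place, $Z(W)=1$ follows and the theorem is proved for both FMZVs and FMZSVs.
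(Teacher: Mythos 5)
First, a point of reference: the paper you were given does not prove this statement at all --- it is imported verbatim from Saito--Wakabayashi \cite{SW16} and used as a black box --- so your attempt can only be measured against the proof in \cite{SW16} itself. Much of your setup is correct and standard: the reversal relation $\zeta_{\mathcal{A}_1}(k_1,\dots,k_r)=(-1)^{k_1+\cdots+k_r}\zeta_{\mathcal{A}_1}(k_r,\dots,k_1)$, the fact that $\zeta_{\mathcal{A}_1}$ respects the harmonic product, the vanishing of $\zeta_{\mathcal{A}_1}(k)$ and hence of $\zeta_{\mathcal{A}_1}(\{c\}^j)$ via Newton's identities, and the resulting vanishing of all depth-two all-odd values. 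The peeling step is also sound as far as it goes: expanding $\zeta_{\mathcal{A}_1}(a_1,\dots,a_{2l})*\zeta_{\mathcal{A}_1}(\{c\}^m)=0$ does express $T(a_1,\dots,a_{2l};m)$ through $T$-sums with fewer $c$'s and still-odd markers, and for $l=1$ your argument is in fact a complete proof.

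The genuine gap is exactly where you place ``the real work,'' and it is fatal to the argument as designed. First, the $m=0$ instance of the theorem is $\zeta_{\mathcal{A}_1}(\{a,b\}^l)=0$, which is the hard core of Saito--Wakabayashi's result; you offer no proof of it, only a plan of attack, so nothing beyond $l\le 1$ is actually established. Second, and more seriously, your inductive frame cannot be repaired by merely supplying that case: because the merge terms immediately leave the alternating class $\{a,b\}^l$, your induction bottoms out at the much stronger assertion that $\zeta_{\mathcal{A}_1}(a_1,\dots,a_{2l})=0$ for \emph{arbitrary} odd $a_1,\dots,a_{2l}$. That assertion is almost certainly false for $l\ge 2$: for instance $\zeta_{\mathcal{A}_1}(1,1,1,3)$ is expected --- consistently with the dimension conjectures and with the low-weight evaluations of mod $p$ multiple harmonic sums due to Hoffman and Zhao --- to be a nonzero rational multiple of the weight-six generator $(B_{p-3}^2\bmod p)_p$, the finite avatar of $\zeta(3)^2$. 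So even granting unlimited effort on the base case, the induction cannot close; the alternating structure must be preserved throughout, not discarded. This is precisely what Saito--Wakabayashi do: they combine the same three ingredients (reversal, harmonic product, vanishing of $\{c\}$-powers) with Bowman--Bradley/Muneta-type \emph{algebraic identities in the harmonic algebra} (and Yamamoto-type identities for the star version) which rewrite the generating element of $(\{a,b\}^l)\sha(\{c\}^m)$ in terms of harmonic products of elements of the \emph{same} alternating family, so that the induction on $l$ and $m$ never leaves that family. Without an identity of that kind, the peeling alone cannot reach the conclusion.
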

In this paper, we partially lift Saito-Wakabayashi's result from $\mathcal{A}_1$ to $\mathcal{A}_2$.
In fact, we show that the Bowman-Bradley type sum of FMZ(S)Vs in $\mathcal{A}_2$ for the shuffle of $\{1,3\}^l$ and $\{2\}^m$ is a rational multiple of the special element $\beta_{4l+2m+1}\boldsymbol{p}$. Here, $\boldsymbol{p}$ and $\beta_k$ are defined to be $(p \bmod{p^2})_p$ and  $(B_{p-k}/k \bmod{p^2})_p$ as elements of $\mathcal{A}_2$, respectively, where $B_n$ is the $n$th Seki-Bernoulli number and $k$ is an integer greater than $1$.
Then, our main theorem is the following:
\begin{thm}[Main theorem] \label{main}
For non-negative integers $l$ and $m$ with $(l,m)\neq(0,0)$, we have
\begin{equation} \label{MT1}
\begin{split}
&\sum_{\substack{ m_0+\cdots+m_{2l}=m \\ m_i\ge0\,(0\le i\le 2l) }}
\zeta_{\mathcal{A}_{2}} (\{2\}^{m_0},1,\{2\}^{m_1},3,\{2\}^{m_2},\ldots,\{2\}^{m_{2l-2}},1,\{2\}^{m_{2l-1}},3,\{2\}^{m_{2l}})  \\
&=(-1)^m
\biggl\{(-1)^l2^{1-2l}\binom{l+m}{l}-4 \binom{2 l+m}{2l}\biggr\} \beta_{4l+2m+1}\boldsymbol{p},
\end{split}
\end{equation}
\begin{equation} \label{MT2}
\begin{split}
&\sum_{\substack{ m_0+\cdots+m_{2l}=m \\ m_i\ge0\,(0\le i\le 2l) }}
\zeta_{\mathcal{A}_{2}}^{\star} (\{2\}^{m_0},1,\{2\}^{m_1},3,\{2\}^{m_2},\ldots,\{2\}^{m_{2l-2}},1,\{2\}^{m_{2l-1}},3,\{2\}^{m_{2l}})  \\
&=(-1)^l2^{1-2l}\binom{l+m}{l} \beta_{4l+2m+1}\boldsymbol{p}.
\end{split}
\end{equation}
\end{thm}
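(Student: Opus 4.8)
The plan is to reduce the identity to a single computation in $\mathcal{A}_1$ by exploiting the projection $\mathcal{A}_2\to\mathcal{A}_1$, and then to evaluate the resulting ``$\boldsymbol{p}$-coefficient'' via a reflection expansion together with the combinatorics of the shuffle, packaged in a generating function. First I would apply the canonical reduction map $\pi\colon\mathcal{A}_2\to\mathcal{A}_1$, a ring homomorphism sending $\zeta_{\mathcal{A}_2}(\mathbf{k})\mapsto\zeta_{\mathcal{A}_1}(\mathbf{k})$ and $\zeta^{\star}_{\mathcal{A}_2}(\mathbf{k})\mapsto\zeta^{\star}_{\mathcal{A}_1}(\mathbf{k})$. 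By Theorem \ref{SWthm} with $a=1$, $b=3$, $c=2$, the images under $\pi$ of both left-hand sums of \eqref{MT1} and \eqref{MT2} vanish. Since $\boldsymbol{p}^{2}=0$, multiplication by $\boldsymbol{p}$ kills $\ker\pi$ and descends to an isomorphism $\mathcal{A}_1\xrightarrow{\sim}\boldsymbol{p}\mathcal{A}_2=\ker\pi$. Hence each Bowman--Bradley sum is \emph{automatically} of the form $Z\boldsymbol{p}$ for a unique $Z\in\mathcal{A}_1$, and because $\beta_{4l+2m+1}\boldsymbol{p}$ depends only on the class $\bar\beta_{4l+2m+1}$ of $B_{p-(4l+2m+1)}/(4l+2m+1)$ modulo $p$, the whole problem becomes the identification of $Z$ with the asserted rational multiple of $\bar\beta_{4l+2m+1}$. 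This is the precise sense in which the theorem lifts Saito--Wakabayashi.

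Second, to compute $Z$ I would extract the $\boldsymbol{p}$-part of each $\zeta_{\mathcal{A}_2}(\mathbf{k})$ by the reflection $n_i\mapsto p-n_i$, using $(p-n)^{-k}\equiv(-1)^{k}(n^{-k}+kp\,n^{-k-1})\pmod{p^2}$. For an index $\mathbf{k}=(k_1,\dots,k_r)$ of weight $w$ this yields
\[
\zeta_{\mathcal{A}_2}(\mathbf{k})=(-1)^{w}\zeta_{\mathcal{A}_2}(\mathbf{k}^{\mathrm{rev}})+(-1)^{w}\boldsymbol{p}\sum_{i=1}^{r}k_i\,\zeta_{\mathcal{A}_1}(k_r,\dots,k_{i+1},k_i+1,k_{i-1},\dots,k_1),
\]
where $\mathbf{k}^{\mathrm{rev}}=(k_r,\dots,k_1)$. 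Our weight $w=4l+2m$ is even, so the leading term returns a reflected Bowman--Bradley sum, again killed modulo $p$ by Theorem \ref{SWthm}; thus $Z\boldsymbol{p}$ is controlled by the weight-$(4l+2m+1)$ correction, supplemented where necessary by the known congruences for multiple harmonic sums modulo $p^2$ (whose simplest instance is $\zeta_{\mathcal{A}_2}(k)=k\beta_{k+1}\boldsymbol{p}$ for even $k$ and $0$ for odd $k$, already exhibiting the $\beta$-structure).

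Third, I would sum these odd-weight $\mathcal{A}_1$ corrections over all insertions of the $m$ copies of $2$ into the $2l+1$ gaps of $1,3,\dots,1,3$ and evaluate the result. The claim is that, after summation, the weighted shuffle sum collapses onto the one-dimensional span of $\bar\beta_{4l+2m+1}$, via the sum and reversal relations in $\mathcal{A}_1$. To pin down the rational factor I would encode the whole family in a two-variable generating function, one variable marking $l$ and one marking $m$, exactly as in the proof of the classical Bowman--Bradley theorem; the insertion of the $\{2\}$-strings into the $2l+1$ gaps raises a single geometric-type factor to the power $2l+1$, which is the source of the binomials $\binom{l+m}{l}$ and $\binom{2l+m}{2l}$ and of the powers $2^{1-2l}$. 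The appearance of the classical coefficient $\binom{2l+m}{2l}$ in \eqref{MT1} strongly suggests that the classical Bowman--Bradley theorem itself enters as the ``even part'' contribution, since $\beta_{4l+2m+1}\boldsymbol{p}$ is the finite analogue of the power $\pi^{4l+2m}$ occurring there. Finally I would deduce \eqref{MT2} either by rerunning the computation with $\le$ in place of $<$, or from \eqref{MT1} through the standard expression of $\zeta^{\star}_{\mathcal{A}_2}$ as the sum of $\zeta_{\mathcal{A}_2}$ over all contractions of adjacent indices, resummed by the same generating function; this accounts for the cleaner single-binomial shape of the star formula.

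The step I expect to be the main obstacle is the last one: evaluating the odd-weight $\mathcal{A}_1$ correction sum in closed form and tracking the exact rational coefficients, in particular reconciling the two-term shape of \eqref{MT1} with the single-term shape of \eqref{MT2} and controlling all the signs $(-1)^l$, $(-1)^m$ and the powers of $2$. Proving that the weighted shuffle sum reduces to a single multiple of $\bar\beta_{4l+2m+1}$, rather than to a genuine multiple-zeta combination, is the crux, and is exactly where the special Bowman--Bradley shape of the indices, as opposed to an arbitrary index, is indispensable.
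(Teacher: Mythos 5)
Your strategy is not the paper's, and as written it has two genuine gaps; the first is a concrete error, the second is that the computation carrying the entire content of the theorem is never performed. The error is in your reflection step. For even weight $w=4l+2m$ the substitution $n_i\mapsto p-n_i$ does give an identity of the shape $S=S^{\mathrm{rev}}+\boldsymbol{p}\,C$, where $S$ is the Bowman--Bradley sum, $C$ is the weight-$(w+1)$ correction in $\mathcal{A}_1$, and $S^{\mathrm{rev}}$ is the sum over the \emph{reversed} indices, i.e.\ the shuffles of $\{3,1\}^{l}$ (not $\{1,3\}^{l}$) with $\{2\}^{m}$. Theorem \ref{SWthm} only says that $S^{\mathrm{rev}}$ vanishes in $\mathcal{A}_1$, hence $S^{\mathrm{rev}}=Z'\boldsymbol{p}$ for some unknown $Z'\in\mathcal{A}_1$; it does \emph{not} say $S^{\mathrm{rev}}=0$ in $\mathcal{A}_2$. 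Your relation therefore reads $Z=Z'+C$ with two unknowns, and the $\boldsymbol{p}$-part is not ``controlled by the correction'' alone: you need an independent, genuinely $\mathcal{A}_2$-level relation between $Z$ and $Z'$ (such as the shuffle relation modulo $p^2$), and nothing in your sketch supplies one.

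Second, the quantitative heart of the theorem --- that the correction collapses onto a single multiple of $\beta_{4l+2m+1}$ and the identification of the rational coefficient --- is deferred to a generating-function argument described only by analogy with the classical Bowman--Bradley proof, and which you yourself flag as ``the crux''. Likewise for \eqref{MT2}: the naive contraction expansion of $\zeta^{\star}_{\mathcal{A}_2}$ produces indices with entries $4,5,6,\dots$ that are no longer of Bowman--Bradley shape, so neither \eqref{MT1} nor Theorem \ref{SWthm} applies to them termwise. For comparison, the paper argues by induction on $l$: Muneta's identity (Lemma \ref{muneta}) rewrites $4^{l}\{(\{1,3\}^{l})\sha(\{2\}^{m})\}$ as $(\{2\}^{l+m})\sh(\{2\}^{l})$ minus lower-$l$ terms; the value $\zeta_{\mathcal{A}_2}((\{2\}^{l+m})\sh(\{2\}^{l}))$ is computed from the $\mathcal{A}_2$-shuffle relation (Lemma \ref{shuffle}) together with the Hessami Pilehrood--Hessami Pilehrood--Tauraso evaluation of $\zeta_{\mathcal{A}_1}(\{2\}^{a},3,\{2\}^{b})$ and the Zhou--Cai evaluation of $\zeta_{\mathcal{A}_2}(\{2\}^{r})$; binomial identities (Lemma \ref{vandermonde}) close the induction, and \eqref{MT2} is deduced from \eqref{MT1} via Yamamoto's identity (Lemma \ref{Yam-thm}), which is precisely the family-specific identity your contraction argument is missing.
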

Saito-Wakabayashi's theorem (Theorem \ref{SWthm}) says that the sum of FMZ(S)Vs in $\mathcal{A}_1$ for the shuffle of $\{a,b\}^l$ and $\{c\}^m$ is zero for any odd positive integers $a, b$ and any even positive integer $c$. On the other hand, by our computer calculations, it seems that the similar sum of FMZ(S)Vs in $\mathcal{A}_2$ \emph{is not} a rational multiple of $\beta_{(a+b)l+cm+1}\boldsymbol{p}$, generally. For example, it is probable that $\zeta_{\mathcal{A}_2}(1,5,1,5)$ is not a rational multiple of $\beta_{13}\boldsymbol{p}$.

Zhao conjectures that the dimension of the $\mathbb{Q}$-vector space spanned by MZVs of weight $k$ coincides with the dimension of the $\mathbb{Q}$-vector space spanned by FMZVs in $\mathcal{A}_2$ of weight $k$ (\cite[Conjecture 9.6]{Zha15}). However, this conjecture doesn't mean that a correspondence $\zeta(k_1, \dots, k_r) \mapsto \zeta_{\mathcal{A}_2}(k_1, \dots, k_r)$ gives an isomorphism between these two spaces. In this situation, it is worth emphasizing that there exists a similarity between Bowman-Bradley type theorems for MZ(S)Vs and FMZ(S)Vs in $\mathcal{A}_2$, i.e. the sum of MZ(S)Vs for the shuffle of $\{1,3\}^l$ and $\{2\}^m$ is a rational multiple of $\pi^{4l+2m}$ and the similar sum of FMZ(S)Vs in $\mathcal{A}_2$ is a rational multiple of $\beta_{4l+2m+1}\boldsymbol{p}$. Note that these two rational coefficients are different.

We prove our main theorem in \S\ref{sec:Prel} and \S\ref{sec:Pr}.
%%%%%%%%%%%%%%%%%%%%%%%%%%%%%%%%%%%%%%%%%%%%%%%%%%%%%%%%%%%%%%%%%%%%%%%%%%%%%%%%%%%%%%%%%%%%%%%%%%%%%%
\section{Preliminaries} \label{sec:Prel}
We prepare some notation and lemmas in this section.
Let $\mathfrak{H}^1$ be the Hoffman algebra $\mathbb{Q}+\mathbb{Q}\langle x, y\rangle y$. 
We define two kinds of shuffle products $\sh$ and $\sha$ on $\mathfrak{H}^1$ as in \cite[\S2]{Mun09}. 
We call a tuple of positive integers an index. Let $\mathfrak{R}=\bigoplus_{r=0}^{\infty}\mathbb{Q}[\mathbb{Z}_{>0}^r]$ be the $\mathbb{Q}$-vector space spanned by all indices. Then, we use the same notation $\sh$ and  $\sha$ on $\mathfrak{R}$ by the correspondence $(k_1, \dots, k_r) \mapsto x^{k_r-1}y\cdots x^{k_1-1}y$ between $\mathfrak{R}$ and $\mathfrak{H}^1$.
Note that, for the definition of MZVs, the order of indices in \cite{Mun09} is reverse to ours.
For example, $(1,2)\sh(1)=3(1,1,2)+(1,2,1)$ and $(2,3)\sha(1)=(1,2,3)+(2,1,3)+(2,3,1)$.
Then, the summations in Theorem \ref{main} are written as $\zeta_{\mathcal{A}_{2}}((\{1,3\}^{l})\sha(\{2\}^{m}))$ and $\zeta_{\mathcal{A}_{2}}^{\star}((\{1,3\}^{l})\sha(\{2\}^{m}))$, respectively.
Here, we extend $\zeta_{\mathcal{A}_2}$ and $\zeta_{\mathcal{A}_2}^{\star}$ to functions on $\mathfrak{R}$, linearly.
\begin{lem} \label{muneta}
For non-negative integers $l$ and $m$, we have
\begin{multline*}
4^l\left\{(\{1,3\}^l)\sha (\{2\}^m)\right\} \\ =(\{2\}^{l+m})\sh (\{2\}^{l}) -\sum_{k=0}^{l-1}4^{k} \binom{2l+m-2k}{l-k}\left\{(\{1,3\}^k) \sha (\{2\}^{2l+m-2k})\right\}.
\end{multline*}
\end{lem}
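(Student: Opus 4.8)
The plan is to recast the asserted identity as a single closed-form evaluation of the $\sh$-product of two $\{2\}$-strings and then read off the Lemma by specialization. Moving the sum on the right-hand side to the left and absorbing the left-hand term as the $k=l$ summand (note $\binom{m}{0}=1$), the Lemma is equivalent to
\begin{equation*}
\sum_{k=0}^{l} 4^{k}\binom{2l+m-2k}{l-k}\left\{(\{1,3\}^{k})\sha(\{2\}^{2l+m-2k})\right\} = (\{2\}^{l+m})\sh(\{2\}^{l}).
\end{equation*}
Since $\binom{2l+m-2k}{l-k}=\binom{2l+m-2k}{l+m-k}$, this is precisely the $(a,b)=(l+m,l)$ instance of the symmetric shuffle formula
\begin{equation} \label{dagger}
(\{2\}^{a})\sh(\{2\}^{b}) = \sum_{k\ge 0} 4^{k}\binom{a+b-2k}{a-k}\left\{(\{1,3\}^{k})\sha(\{2\}^{a+b-2k})\right\},
\end{equation}
so it suffices to prove \eqref{dagger} for all non-negative integers $a,b$.

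First I would unwind the definition of $\sha$. Writing an index as a word in the letters $z_{j}=x^{j-1}y$, the product $\sha$ is the shuffle in this alphabet (equivalently, the shuffle of indices regarding each entry as a single letter, as exhibited by $(2,3)\sha(1)$), so that
\begin{equation*}
(\{1,3\}^{k})\sha(\{2\}^{j}) = \sum_{\substack{c_{0}+\cdots+c_{2k}=j\\ c_{i}\ge 0}} (\{2\}^{c_{0}},1,\{2\}^{c_{1}},3,\ldots,3,\{2\}^{c_{2k}}),
\end{equation*}
each index occurring with coefficient $1$. On the word side the letters are $x,y$ and $\{2\}^{n}$ corresponds to $(xy)^{n}$; here I would record that every index occurring in $(xy)^{a}\sh(xy)^{b}$ has all entries in $\{1,2,3\}$ and equally many $1$'s and $3$'s, so that both sides of \eqref{dagger} are supported on the same indices and only the coefficients must be matched.

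To organize the coefficients I would pass to generating functions, setting $F(u)=\sum_{n\ge 0}(\{2\}^{n})u^{n}$ and $H(w)=\sum_{k\ge 0}(4w)^{k}(\{1,3\}^{k})$ in $\mathfrak{R}[[u]]$ and $\mathfrak{R}[[w]]$. Summing \eqref{dagger} against $t^{a}s^{b}$, grouping the right-hand side by $k$ and $n:=a+b-2k$, and applying the binomial theorem, the right-hand side collapses to $H(ts)\sha F(s+t)$, while the left-hand side is exactly $F(t)\sh F(s)$. Thus \eqref{dagger} is equivalent to the generating-function identity
\begin{equation*}
F(t)\sh F(s) = H(ts)\sha F(s+t),
\end{equation*}
and the Lemma follows by extracting the $t^{l+m}s^{l}$-coefficient together with the binomial reindexing above.

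The main obstacle is this last identity, i.e. the closed-form evaluation of $(\{2\}^{a})\sh(\{2\}^{b})$; it is the algebraic heart of the Bowman--Bradley theorem and must bridge the two different products $\sh$ and $\sha$. I would prove it by induction on $a+b$ using the recursive definition of $\sh$ (peeling off trailing letters). Since $(xy)^{a}$ and $(xy)^{b}$ end in $y$, one step produces shuffles of words ending in $x$, which have to be carried along as auxiliary unknowns; a $3$ (the block $x^{2}y$) together with the factor $4$ arises precisely when two trailing $x$'s, one from each factor, meet. The delicate point, and where I expect the real work to lie, is showing that this enlarged system of shuffle relations closes and yields exactly the multiplicities $4^{k}\binom{a+b-2k}{a-k}$ — equivalently, that non-alternating arrangements of the $1$'s and $3$'s never occur. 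Alternatively, one may invoke the shuffle identity underlying \cite{BB02} (or Muneta's reformulation in \cite{Mun09}) directly, after which only the elementary binomial manipulation above remains.
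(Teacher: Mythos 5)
Your reduction is correct and coincides with the paper's proof: moving the sum across and absorbing the left-hand side as the $k=l$ summand turns the Lemma into the symmetric identity $(\{2\}^{a})\sh(\{2\}^{b})=\sum_{k}4^{k}\binom{a+b-2k}{a-k}\bigl\{(\{1,3\}^{k})\sha(\{2\}^{a+b-2k})\bigr\}$ at $(a,b)=(l+m,l)$, which is exactly \cite[Proposition 2 (1)]{Mun09}, and that citation is the entirety of the paper's argument. Your primary route of proving this identity from scratch (generating functions plus induction on the shuffle recursion) is only a sketch with the hard step --- closing the auxiliary system and obtaining the multiplicities $4^{k}\binom{a+b-2k}{a-k}$ --- left open, but since you explicitly offer the Muneta citation as the alternative, the proof as submitted is essentially the same as the paper's.
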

\begin{proof}
This follows from \cite[Proposition 2 (1)]{Mun09}.
\end{proof}
The following lemma is the shuffle relation for FMZVs in $\mathcal{A}_2$.
\begin{lem} \label{shuffle}
For indices $\boldsymbol{k}$ and $\boldsymbol{l}=(l_1,\ldots,l_s)$, we have
\begin{align*}
&\zeta_{\mathcal{A}_2} (\boldsymbol{k} \sh \boldsymbol{l}) \\
&=(-1)^{l_1+\cdots+l_s} \sum_{\substack{e_1+\cdots +e_s=0, 1 \\ e_1,\ldots,e_s\ge0}}
\prod_{j=1}^s \binom{l_j+e_j-1}{e_j} \zeta_{\mathcal{A}_2} (\boldsymbol{k},l_s+e_s,\ldots,l_1+e_1) \boldsymbol{p}^{e_1+\cdots+e_s}.
\end{align*}
\end{lem}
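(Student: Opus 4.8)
The plan is to reduce the statement to the classical shuffle product formula and then track the effect of reversing the order of the argument $\boldsymbol{l}$ modulo $p^2$. The shuffle product $\boldsymbol{k}\sh\boldsymbol{l}$ corresponds, under the identification $\mathfrak{R}\cong\mathfrak{H}^1$, to the ordinary shuffle of the associated words in $x,y$; this shuffle arises analytically from interleaving two iterated integrals, or combinatorially from interleaving two chains of summation indices. First I would express the left-hand side by writing out the defining sums for $\zeta_{\mathcal{A}_2}$: for each prime $p$ the quantity $\zeta_{\mathcal{A}_2}(\boldsymbol{k}\sh\boldsymbol{l})$ is a sum over all ways of merging the summation variables for $\boldsymbol{k}$ and for $\boldsymbol{l}$ subject to the strict ordering $1\le n_1<\cdots<n_r\le p-1$, reduced modulo $p^2$.

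The key step is the reversal identity modulo $p^2$. The classical relation $\zeta_{\mathcal{A}_2}(k_1,\dots,k_r)=(-1)^{k_1+\cdots+k_r}\zeta_{\mathcal{A}_2}(k_r,\dots,k_1)$ in $\mathcal{A}_1$ acquires a correction term in $\mathcal{A}_2$ coming from the substitution $n_i\mapsto p-n_i$. Expanding $(p-n)^{-k}=(-n)^{-k}(1-p/n)^{-k}\equiv(-1)^k n^{-k}(1+kp/n)\pmod{p^2}$ produces, for each coordinate, a choice of either the leading term (exponent $e_j=0$) or a correction term carrying a factor $k_j\,p\,/n_j$ (exponent $e_j=1$). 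I would apply this reversal only to the block of indices $\boldsymbol{l}=(l_1,\dots,l_s)$, while leaving $\boldsymbol{k}$ untouched, so that the reversed word becomes $(\boldsymbol{k},l_s+e_s,\dots,l_1+e_1)$. Since we work modulo $p^2$, at most one correction factor of $p$ survives, which forces the constraint $e_1+\cdots+e_s\in\{0,1\}$ appearing in the sum. The binomial coefficient $\binom{l_j+e_j-1}{e_j}$ is exactly the coefficient arising from the expansion of $(1-p/n_j)^{-l_j}$: it equals $1$ when $e_j=0$ and equals $l_j$ when $e_j=1$, matching the factor $l_j$ from the correction term and promoting the exponent at position $j$ from $l_j$ to $l_j+1$.

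The main obstacle will be bookkeeping the interplay between the shuffle interleaving and the reversal, specifically ensuring that the sign $(-1)^{l_1+\cdots+l_s}$ and the placement of the incremented exponents $l_s+e_s,\dots,l_1+e_1$ come out correctly after reversing. I would handle this cleanly by first establishing the shuffle relation over $\mathcal{A}_2$ in the unreduced form (merely as a statement about merging summation variables, valid before any reversal), and then applying the modulo-$p^2$ reversal termwise; because the reversal is multiplicative across the $s$ coordinates of $\boldsymbol{l}$, the product $\prod_{j=1}^s\binom{l_j+e_j-1}{e_j}$ and the overall power $\boldsymbol{p}^{e_1+\cdots+e_s}$ factor out naturally. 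Finally, collecting the $e_j=0$ terms (giving the $\mathcal{A}_1$-type reversal with a possible correction) and the single-$e_j=1$ terms (giving the weight-shifted contributions) yields precisely the stated formula.
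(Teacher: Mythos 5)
There is a genuine gap, and it sits at the very first step. Note first that the paper does not prove this lemma from scratch: it quotes it from Seki's thesis (Theorem 6.4 there) and from Jarossay's Lemma 4.17, so a self-contained argument must supply the substance of those results, which your sketch does not. The decisive problem is your assertion that $\zeta_{\mathcal{A}_2}(\boldsymbol{k}\sh\boldsymbol{l})$ is ``a sum over all ways of merging the summation variables for $\boldsymbol{k}$ and for $\boldsymbol{l}$ subject to the strict ordering.'' That describes the index shuffle $\sha$ (equivalently, the collision-free part of the harmonic product), not the word shuffle $\sh$ appearing in the lemma. The two operations are genuinely different: with the paper's conventions, $(1,2)\sh(1)=3(1,1,2)+(1,2,1)$, whereas merging summation variables yields $(1,2)\sha(1)=2(1,1,2)+(1,2,1)$. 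The multiplicities differ because in a word shuffle the letters $x$ contributed by one word can land inside a block $x^{k-1}y$ whose $y$ comes from the other word. For the same reason your second step cannot be executed as stated: in a shuffled word the exponents of the resulting index mix letters originating from $\boldsymbol{k}$ and from $\boldsymbol{l}$, so inside $\zeta_{\mathcal{A}_2}(\boldsymbol{k}\sh\boldsymbol{l})$ there is no well-defined ``block of variables belonging to $\boldsymbol{l}$'' to which one could apply $n\mapsto p-n$.

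The reversal idea does belong in a correct proof, but it must be applied to the right-hand side: substituting $u\mapsto p-u$ in the last $s$ summation variables of $\zeta_{<p}(\boldsymbol{k},l_s,\dots,l_1)$ and expanding $(p-u)^{-l}\equiv(-1)^{l}u^{-l}(1+lp/u)\bmod p^{2}$ does produce the sign $(-1)^{l_1+\cdots+l_s}$, the coefficients $\binom{l_j+e_j-1}{e_j}$ and the constraint $e_1+\cdots+e_s\in\{0,1\}$, exactly as you describe --- that part of the sketch is sound. But what this substitution produces on the other side is a sum over two independent increasing chains $0<m_1<\cdots<m_r$ and $0<v_1<\cdots<v_s$ coupled only by the condition $m_r+v_s<p$, and the heart of the lemma is the combinatorial identity equating such a coupled double sum with $\zeta_{<p}(\boldsymbol{k}\sh\boldsymbol{l})$ --- the finite avatar of ``a product of iterated integrals equals the shuffle,'' established by iterated partial-fraction decompositions. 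You dismiss this as bookkeeping, but it is precisely the content of the cited theorem of Seki and of Jarossay, and without it the argument does not close.
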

\begin{proof}
This follows from \cite[Theorem 6.4]{SekD} which is also proved independently by Jarossay in \cite[Lemma 4.17]{Jar17} by taking $\varprojlim_n\mathcal{A}_n \twoheadrightarrow \mathcal{A}_2$.
\end{proof}
\begin{lem} \label{kkk}
For a positive integer $r$, we have
\begin{align}
\zeta_{\mathcal{A}_{2}} (\{2\}^{r}) 
&=(-1)^{r-1}2\beta_{2r+1} \boldsymbol{p}, \label{ZC}\\
\zeta_{\mathcal{A}_{2}}^{\star} (\{2\}^{r}) 
&=2\beta_{2r+1} \boldsymbol{p}.\label{ZC-star}
\end{align}
\end{lem}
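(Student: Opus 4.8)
The plan is to recognize the two sums as symmetric functions of the residues $1/n^{2}$ and to exploit the nilpotence of $\boldsymbol{p}$ in $\mathcal{A}_2$. Fixing a prime $p$, I would regard $\zeta_{\mathcal{A}_2}(\{2\}^r)$ and $\zeta_{\mathcal{A}_2}^{\star}(\{2\}^r)$ as the elementary symmetric function $e_r$ and the complete homogeneous symmetric function $h_r$, respectively, in the $p-1$ variables $1/1^{2},1/2^{2},\ldots,1/(p-1)^{2}$ taken modulo $p^{2}$, since strict inequalities in the defining sum give $e_r$ and non-strict inequalities give $h_r$. The associated power sums are then
\[
p_i=\sum_{n=1}^{p-1}\frac{1}{n^{2i}}=\zeta_{\mathcal{A}_2}(2i).
\]

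The single arithmetic input I would need is the classical power-sum congruence
\[
\zeta_{\mathcal{A}_2}(2i)=2i\,\beta_{2i+1}\boldsymbol{p}\qquad(i\ge 1),
\]
equivalently $\sum_{n=1}^{p-1}1/n^{2i}\equiv \tfrac{2i}{2i+1}B_{p-2i-1}\,p \pmod{p^{2}}$ for all sufficiently large $p$. I would either cite this directly (it is standard; cf. Seki \cite{Sek16}) or derive it from Faulhaber's formula combined with the symmetry $n\leftrightarrow p-n$. The upshot is that every $p_i$ is a rational multiple of $\boldsymbol{p}$.

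The decisive structural observation is that $\boldsymbol{p}^{2}=(p^{2}\bmod p^{2})_p=0$ in $\mathcal{A}_2$, so the product of any two elements each divisible by $\boldsymbol{p}$ vanishes. I would then invoke Newton's identities, valid in any commutative ring,
\[
r\,e_r=\sum_{i=1}^{r}(-1)^{i-1}e_{r-i}\,p_i,\qquad r\,h_r=\sum_{i=1}^{r}h_{r-i}\,p_i,
\]
and argue by induction on $r$. Once $e_1,\ldots,e_{r-1}$ (resp. $h_1,\ldots,h_{r-1}$) are known to be multiples of $\boldsymbol{p}$, each cross term $e_{r-i}p_i$ (resp. $h_{r-i}p_i$) with $1\le i\le r-1$ is a product of two $\boldsymbol{p}$-multiples and hence vanishes. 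Only the term $i=r$ survives, giving $r\,e_r=(-1)^{r-1}p_r$ and $r\,h_r=p_r$; substituting $p_r=2r\,\beta_{2r+1}\boldsymbol{p}$ and dividing by $r$ yields \eqref{ZC} and \eqref{ZC-star} and simultaneously closes the induction, since the resulting values are again multiples of $\boldsymbol{p}$.

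The main obstacle is purely the bookkeeping of the single-variable congruence: pinning down the constant $2i/(2i+1)$ and the Bernoulli index $p-2i-1$ precisely, so that the answer matches the definition $\beta_k=(B_{p-k}/k)_p$. Everything beyond that is a formal collapse forced by $\boldsymbol{p}^{2}=0$, and the alternating sign in \eqref{ZC} against its absence in \eqref{ZC-star} reflects exactly the sign difference between the elementary and complete-homogeneous Newton recursions.
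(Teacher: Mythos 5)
Your proof is correct, but it takes a genuinely different route: the paper disposes of this lemma by citation alone, deducing \eqref{ZC} as a special case of the second congruence in the last remark of \cite{ZC07} and then \eqref{ZC-star} from \eqref{ZC} via a star--non-star relation in \cite[Corollary 3.16 (42)]{SS17}, whereas you give an essentially self-contained derivation. Your identification of the two sums with $e_r$ and $h_r$ in the variables $1/n^2$, combined with Newton's identities and the collapse forced by $\boldsymbol{p}^2=0$, is sound: by induction every $e_j$ and $h_j$ lies in the ideal $\boldsymbol{p}\mathcal{A}_2$, so all cross terms $e_{r-i}p_i$ and $h_{r-i}p_i$ with $1\le i\le r-1$ vanish, leaving $re_r=(-1)^{r-1}p_r$ and $rh_r=p_r$; the single arithmetic input $\zeta_{\mathcal{A}_2}(2i)=2i\,\beta_{2i+1}\boldsymbol{p}$ is the standard even-power-sum congruence and is correctly normalized against the definition $\beta_k=(B_{p-k}/k \bmod p^2)_p$. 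Two minor points worth making explicit: that congruence only holds for $p$ large relative to $i$, which is harmless since finitely many primes are irrelevant in $\mathcal{A}_2$; and Newton's identities in $p-1$ variables are uniform in $p$ once $p-1\ge r$. What your approach buys is transparency — it explains the sign $(-1)^{r-1}$ in \eqref{ZC} versus its absence in \eqref{ZC-star} as exactly the sign discrepancy between the elementary and complete-homogeneous recursions, and it replaces two external references by one classical congruence — at the cost of redoing work the literature already provides.
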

\begin{proof}
The equality \eqref{ZC} is a special case of the second congruence in the last remark of \cite{ZC07}. The equality \eqref{ZC-star} is obtained by \eqref{ZC} and \cite[Corollary 3.16 (42)]{SS17}.
\end{proof}
\begin{lem}[{Hessami Pilehrood-Hessami Pilehrood-Tauraso \cite[Theorem 4.1]{HHT14}}] \label{two-three}
 For non-negative integers $a$ and $b$, we have
\[
\zeta_{\mathcal{A}_1} (\{2\}^{a},3,\{2\}^{b})
=\frac{(-1)^{a+b}2(a-b)}{a+1} \binom{2a+2b+3}{2b+2} \beta_{2a+2b+3}. 
\]
Here, we regard $\beta_{2a+2b+3}$ as an element of $\mathcal{A}_1$ by the projection $\mathcal{A}_2 \twoheadrightarrow \mathcal{A}_1$.
\end{lem}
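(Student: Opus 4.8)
The plan is to fix a large prime $p$, prove the corresponding congruence modulo $p$, and then let $p$ vary; recall that here $\beta_{2a+2b+3}$ is represented by $B_{p-(2a+2b+3)}/(2a+2b+3)\bmod p$. Write $\zeta_{<p}(\{2\}^a,3,\{2\}^b)$ for the truncated multiple harmonic sum whose image in $\mathcal{A}_1$ is the value in question. First I would package all of these into the two-variable generating series
\[
\Phi(x,y)=\sum_{a,b\ge0}\zeta_{<p}(\{2\}^a,3,\{2\}^b)\,x^{2a}y^{2b}=\sum_{m=1}^{p-1}\frac1{m^3}\prod_{n=1}^{m-1}\Bigl(1+\frac{x^2}{n^2}\Bigr)\prod_{n=m+1}^{p-1}\Bigl(1+\frac{y^2}{n^2}\Bigr),
\]
where $m$ is the index carrying the exponent $3$; the coefficient of $x^{2a}y^{2b}$ recovers $\zeta_{<p}(\{2\}^a,3,\{2\}^b)$ exactly.

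The next step is to bring $\Phi$ into a manageable closed form modulo $p$. Applying $n\mapsto p-n$ in the second product together with $1/(p-n)^2\equiv1/n^2$, the Wilson-type congruence $(p-m-1)!\equiv(-1)^{m+1}/m!$, and the evaluation $\prod_{n=1}^{p-1}(1+y^2/n^2)\equiv(1-(-1)^{(p-1)/2}y^{p-1})^2$, I would reduce $\Phi$, in every fixed total degree below $p-1$, to the single hypergeometric-type sum
\[
\Phi(x,y)\equiv\sum_{m=1}^{p-1}\frac1{m(m^2+y^2)}\prod_{n=1}^{m-1}\frac{n^2+x^2}{n^2+y^2}\pmod p.
\]
Setting $Q_m=\prod_{n=1}^{m-1}\frac{n^2+x^2}{n^2+y^2}$ and exploiting $Q_{m+1}-Q_m=\frac{(x^2-y^2)Q_m}{m^2+y^2}$, an Abel summation (using $Q_1=1$ and $Q_p\equiv1$ in low degree) isolates an elementary boundary contribution and rewrites $\Phi$ as $\tfrac1{x^2-y^2}$ times a residual sum in which the quadratic denominator has been removed.

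It then remains to extract the coefficient of $x^{2a}y^{2b}$. Expanding $\prod(1+x^2/n^2)$ and $\prod(1+y^2/n^2)^{-1}$ into the symmetric sums $H_{m-1}(\{2\}^{a})$ and $H^{\star}_{m-1}(\{2\}^{b'})$ turns each coefficient into a $\mathbb{Q}$-linear combination of finite multiple harmonic sums of weight $2a+2b+3$ taken over the full range $1\le\cdots\le m\le p-1$; these are precisely truncations of $\zeta_{\mathcal{A}_1}$-values of that weight. At this point I would invoke the known reductions of such finite multiple zeta values modulo $p$ to Bernoulli numbers, with the depth-two evaluation $\zeta_{\mathcal{A}_1}(s,t)\equiv(-1)^t\binom{s+t}{t}\beta_{s+t}$ as the prototype, so that every term becomes a rational multiple of $\beta_{2a+2b+3}$.

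The main obstacle is the final combinatorial collapse: one must show that the accumulated rational coefficients assemble into the single closed form $\frac{(-1)^{a+b}2(a-b)}{a+1}\binom{2a+2b+3}{2b+2}$. I expect this to require a nontrivial hypergeometric summation identity, or a careful second use of the telescoping relation above to control the higher-depth pieces. Throughout, the antisymmetry $\zeta_{<p}(\{2\}^a,3,\{2\}^b)\equiv-\zeta_{<p}(\{2\}^b,3,\{2\}^a)\pmod p$, coming from $n\mapsto p-n$ and the odd total weight, is a guiding consistency check: it forces the factor $a-b$ and kills the diagonal $a=b$. The low-weight cases $a+b\le1$ reduce directly to the depth-two formula and fix the normalisation.
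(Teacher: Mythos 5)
The paper does not prove this lemma at all: it is imported verbatim as \cite[Theorem 4.1]{HHT14}, so there is no internal argument to compare yours against. Judged on its own terms, your sketch sets up sensible machinery --- the two-variable generating function $\Phi(x,y)$, the reflection $n\mapsto p-n$, the evaluation of $\prod_{n=1}^{p-1}(1+y^2/n^2)$ via Wilson-type congruences, and the Abel summation on $Q_m$ --- and your consistency checks are correct (the antisymmetry under $a\leftrightarrow b$ really does match the right-hand side, since $\frac{1}{a+1}\binom{2a+2b+3}{2b+2}=\frac{1}{b+1}\binom{2a+2b+3}{2a+2}$). But the proposal is not a proof: the decisive step, namely extracting the coefficient of $x^{2a}y^{2b}$ and showing that the accumulated rational numbers assemble into $\frac{(-1)^{a+b}2(a-b)}{a+1}\binom{2a+2b+3}{2b+2}$, is exactly the content of the theorem, and you explicitly defer it to ``a nontrivial hypergeometric summation identity'' that you neither state nor prove. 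Everything preceding that point is routine normalization; the identity you postpone is where all the work lives (in \cite{HHT14} it is handled by WZ-pair/Leshchiner-type machinery).

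There is also a concrete soft spot earlier in the argument. You assert that after expanding into multiple harmonic sums ``every term becomes a rational multiple of $\beta_{2a+2b+3}$'' by invoking ``known reductions'' with the depth-two evaluation as prototype. For depth $\ge 3$ this is not a black box: weight-$w$ multiple harmonic sums modulo $p$ reduce in general to polynomials in Bernoulli numbers, including products $B_{p-i}B_{p-j}$ with $i+j=w$. For odd $w$ these products do vanish, but only after the parity argument that one of $p-i$, $p-j$ is then an odd index $\ge 3$, forcing that Bernoulli number to be zero; and even granting this, the reduction leaves rational coefficients that are precisely the quantities you still need to compute. So the step as written both assumes more than the cited prototype delivers and circles back to the unresolved combinatorial collapse. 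To turn the sketch into a proof you would need to either carry out the hypergeometric summation explicitly or verify the claimed closed form against an independent recursion (e.g., in $a+b$) with enough initial cases pinned down.
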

\begin{lem} \label{aaa}
For non-negative integers $l$ and $m$ with $(l,m)\neq(0,0)$, we have
\begin{align*}
\zeta_{\mathcal{A}_{2}}((\{2\}^{l+m})\sh(\{2\}^{l}))
=(-1)^{m} 2 \left\{1-2\binom{4l+2m}{2l}\right\}
\beta_{4l+2m+1} \boldsymbol{p}. 
\end{align*}  
\end{lem}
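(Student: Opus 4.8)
The plan is to expand the shuffle product with the relation of Lemma \ref{shuffle}, isolate the few surviving terms, evaluate them with the closed formulas of Lemmas \ref{kkk} and \ref{two-three}, and conclude by an elementary binomial identity. First I would apply Lemma \ref{shuffle} with $\boldsymbol{k}=(\{2\}^{l+m})$ and $\boldsymbol{l}=(\{2\}^{l})$, so that $s=l$ and every $l_j=2$. The global sign is $(-1)^{2l}=1$, and since $\binom{1}{0}=1$ and $\binom{2}{1}=2$, only two kinds of terms occur: the term with all $e_j=0$, contributing $\zeta_{\mathcal{A}_2}(\{2\}^{2l+m})$; and the $l$ terms having exactly one $e_j=1$, each carrying the factor $2\boldsymbol{p}$ and producing an index $\{2\}^{l+m+a},3,\{2\}^{b}$ with $a+b=l-1$, where $(a,b)$ ranges once over all such pairs. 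This yields
\[
\zeta_{\mathcal{A}_2}((\{2\}^{l+m})\sh(\{2\}^{l}))=\zeta_{\mathcal{A}_2}(\{2\}^{2l+m})+2\boldsymbol{p}\sum_{\substack{a+b=l-1\\ a,b\ge0}}\zeta_{\mathcal{A}_2}(\{2\}^{l+m+a},3,\{2\}^{b}).
\]

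Next I would evaluate the first summand by \eqref{ZC}, obtaining $(-1)^{2l+m-1}2\beta_{4l+2m+1}\boldsymbol{p}$. For the remaining sum the key point is that $\boldsymbol{p}^2=0$ in $\mathcal{A}_2$, so multiplication by $\boldsymbol{p}$ factors through the projection $\mathcal{A}_2\twoheadrightarrow\mathcal{A}_1$; hence each $\zeta_{\mathcal{A}_2}(\{2\}^{l+m+a},3,\{2\}^{b})\boldsymbol{p}$ may be replaced by the corresponding $\zeta_{\mathcal{A}_1}$-value times $\boldsymbol{p}$, and Lemma \ref{two-three} applies. With $A=l+m+a$ and $B=b$ one has $A+B=2l+m-1$ and $2A+2B+3=4l+2m+1$, so every term is a rational multiple of $\beta_{4l+2m+1}$ and the whole expression becomes $\beta_{4l+2m+1}\boldsymbol{p}$ times a rational number.

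Finally, after factoring out $\beta_{4l+2m+1}\boldsymbol{p}$, matching the asserted coefficient reduces to the identity
\[
\sum_{b=0}^{l-1}\frac{N-1-2b}{N-b}\binom{2N+1}{2b+2}=\binom{2N}{2l}-1,\qquad N:=2l+m,
\]
which I would prove by writing $\tfrac{N-1-2b}{N-b}=1-\tfrac{b+1}{N-b}$ and using $\tfrac{b+1}{N-b}\binom{2N+1}{2b+2}=\binom{2N+1}{2b+1}$ to recast the left-hand side as the partial alternating sum $\sum_{j=1}^{2l}(-1)^{j}\binom{2N+1}{j}$; this equals $\binom{2N}{2l}-1$ by the standard identity $\sum_{j=0}^{k}(-1)^{j}\binom{n}{j}=(-1)^{k}\binom{n-1}{k}$. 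Since $2N=4l+2m$, this gives exactly the coefficient $(-1)^{m}2\{1-2\binom{4l+2m}{2l}\}$ after collecting the two contributions.

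I expect the main obstacle to be the bookkeeping of Lemma \ref{shuffle}: correctly reading off which indices and which binomial weights it produces (including the reversal of $\boldsymbol{l}$ and the single placement of the entry $3$), and then recognizing the resulting rational sum as the clean alternating binomial sum above. Once that is set up, the reduction modulo $\boldsymbol{p}^{2}$ and the two evaluation lemmas make the rest routine; one also checks the degenerate case $l=0$ (where the sum over $a+b=l-1$ is empty, so the claim follows from \eqref{ZC} alone) directly.
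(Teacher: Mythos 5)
Your proposal is correct and follows essentially the same route as the paper: expand via Lemma \ref{shuffle} into $\zeta_{\mathcal{A}_2}(\{2\}^{2l+m})$ plus $l$ terms with a single entry $3$, evaluate these by Lemma \ref{kkk} \eqref{ZC} and Lemma \ref{two-three} (using $\boldsymbol{p}^2=0$ to pass to $\mathcal{A}_1$), and reduce the coefficient to the partial alternating sum $\sum_{j=0}^{2l}(-1)^j\binom{4l+2m+1}{j}=\binom{4l+2m}{2l}$. The only difference is cosmetic bookkeeping in the binomial identity (you parametrize by $b$ and split off the $-1$ directly, while the paper extends the sum to $j=l$), so no further comparison is needed.
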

\begin{proof}
By Lemma \ref{shuffle}, \ref{kkk} \eqref{ZC}, and \ref{two-three}, we have
\begin{align*}
&\zeta_{\mathcal{A}_{2}}((\{2\}^{l+m})\sh(\{2\}^{l}))\\
&=\zeta_{\mathcal{A}_2}(\{2\}^{2l+m})+2\sum_{j=0}^{l-1}\zeta_{\mathcal{A}_2}(\{2\}^{l+m+j}, 3, \{2\}^{l-j-1})\boldsymbol{p}\\
&=(-1)^{m-1}\left\{2\beta_{4l+2m+1}\boldsymbol{p}+4 \sum_{j=0}^{l-1}\frac{m+2j+1}{l+m+j+1}
\binom{4l+2m+1}{2l-2j} \beta_{4l+2m+1}\boldsymbol{p}\right\}.
\end{align*} 
Since $\frac{a-2b}{a}\binom{a}{b}=\binom{a-1}{b}-\binom{a-1}{b-1}$, 
by putting $a=4l+2m+2$ and $b=2j$, 
we have
\begin{align*}
&\sum_{j=0}^{l}\frac{m+2j+1}{l+m+j+1} \binom{4l+2m+1}{2l-2j}=\sum_{j=0}^{l}\frac{2l+m-2j+1}{2l+m+1} \binom{4l+2m+2}{2j}\\
&=\sum_{j=0}^{l}\left\{\binom{4l+2m+1}{2j}-\binom{4l+2m+1}{2j-1}\right\}
=\sum_{j=0}^{2l}(-1)^{j}\binom{4l+2m+1}{j}\\
&=\sum_{j=0}^{2l}(-1)^{j}\left\{\binom{4l+2m}{j}+\binom{4l+2m}{j-1}\right\}
=\binom{4l+2m}{2l}.
\end{align*} 
Hence, we obtain the desired formula.
\end{proof}
\begin{lem} \label{vandermonde}
For non-negative integers $l$ and $m$, we have
\begin{align*}
&\sum_{k=0}^{l} (-1)^{k} \binom{2l+m-2k}{l-k}\binom{2l+m-k}{k}=1, \\
&\sum_{k=0}^{l} 4^{k} \binom{2l+m-2k}{l-k}\binom{2l+m}{2k}=\binom{4l+2m}{2l}.
\end{align*}
\end{lem}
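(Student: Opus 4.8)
The plan is to prove both identities by a single device: coefficient extraction. Writing $n=2l+m$ and using $\binom{n-2k}{l-k}=[t^l]\,t^k(1+t)^{n-2k}$ (coefficient of $t^l$), each sum becomes the coefficient of $t^l$ in a series that can be summed over $k$ into closed form. Since $l\le n$ (because $m\ge0$), the target coefficient always lies in range, so no boundary issues arise.

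For the first identity, I would compute
\[
\sum_{k=0}^{l}(-1)^k\binom{n-2k}{l-k}\binom{n-k}{k}
=[t^l]\,(1+t)^n\sum_{k\ge0}\binom{n-k}{k}\Bigl(\frac{-t}{(1+t)^2}\Bigr)^k.
\]
The inner sum is the classical Fibonacci/Chebyshev-type generating function $\sum_k\binom{n-k}{k}z^k=(\alpha^{n+1}-\beta^{n+1})/(\alpha-\beta)$, where $\alpha,\beta$ are the roots of $X^2-X-z$. Substituting $z=-t/(1+t)^2$ gives $\sqrt{1+4z}=(1-t)/(1+t)$, hence $\alpha=1/(1+t)$ and $\beta=t/(1+t)$; the whole expression then collapses to $\frac{1-t^{n+1}}{1-t}=1+t+\cdots+t^n$, whose coefficient of $t^l$ is $1$, as claimed.

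For the second identity, the same extraction yields
\[
\sum_{k=0}^{l}4^k\binom{n-2k}{l-k}\binom{n}{2k}
=[t^l]\sum_{k\ge0}\binom{n}{2k}(2\sqrt{t})^{2k}(1+t)^{n-2k}.
\]
Recognizing the right-hand sum as the even part of a binomial expansion and using $1+t\pm2\sqrt{t}=(1\pm\sqrt{t})^2$, it equals $\tfrac12\bigl((1+\sqrt{t})^{2n}+(1-\sqrt{t})^{2n}\bigr)$. Only even powers of $\sqrt{t}$ survive in the average, so the coefficient of $t^l$ is $\binom{2n}{2l}=\binom{4l+2m}{2l}$, exactly the desired right-hand side.

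The routine parts are the coefficient bookkeeping; the one genuinely delicate step is identifying the two closed forms—especially verifying that the Fibonacci generating function with $z=-t/(1+t)^2$ telescopes to a finite geometric series, which hinges on the clean factorization $\sqrt{1+4z}=(1-t)/(1+t)$. An alternative would be a double induction on $l$ and $m$ via Pascal's rule, but I \emph{expect} the generating-function route to be shorter and less error-prone.
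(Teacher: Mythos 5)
Your proof is correct. For the second identity your route is essentially the paper's in a different packaging: the paper compares the coefficient of $Y^{2l}$ in $(1+Y)^{4l+2m}=(1+2Y+Y^2)^{2l+m}$ via the trinomial expansion, while you expand the same power binomially in $(1+Y^2)$ and $2Y$ and keep the even part, i.e.\ you read off $[t^l]$ of $\tfrac12\bigl((1+\sqrt{t})^{2n}+(1-\sqrt{t})^{2n}\bigr)$ with $t=Y^2$; these are the same coefficient comparison organized differently. For the first identity the routes genuinely differ: the paper applies the transformation $\binom{a-b}{c-b}\binom{a}{b}=(-1)^{a-c}\binom{c}{b}\binom{-c-1}{a-c}$ and finishes in two lines with the Chu--Vandermonde identity, whereas you extract $[t^l]$ and invoke the Binet-type closed form of $\sum_k\binom{n-k}{k}z^k$, using that $z=-t/(1+t)^2$ makes $1+4z=(1-t)^2/(1+t)^2$ a perfect square so the sum collapses to $(1+t+\cdots+t^n)/(1+t)^n$. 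I checked the key computations: $\alpha=1/(1+t)$, $\beta=t/(1+t)$ do satisfy $\alpha+\beta=1$, $\alpha\beta=-z$, and $(1+t)^nG_n(z)=1+t+\cdots+t^n$, whose coefficient of $t^l$ is $1$ since $0\le l\le 2l+m$. Your approach costs a little more machinery (one must know or rederive the Fibonacci-polynomial generating function), but it buys uniformity --- a single coefficient-extraction device proves both identities --- and it makes transparent why the first sum is exactly $1$ rather than merely some constant. The only step you should make fully explicit in a final write-up is the extension of the summation from $k\le l$ to all $k\ge 0$: it is harmless because $t^k(1+t)^{n-2k}$ contributes nothing to $[t^l]$ when $k>l$, and the binomial factors vanish for $k>n/2$; your remark that ``no boundary issues arise'' gestures at this but does not prove it.
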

\begin{proof}
Since $\binom{a-b}{c-b}\binom{a}{b}=(-1)^{a-c}\binom{c}{b}\binom{-c-1}{a-c}$, by putting $a=2l+m-k$, $b=k$, and $c=l$, we have
\begin{align*}
&\sum_{k=0}^{l} (-1)^{k} \binom{2l+m-2k}{l-k}\binom{2l+m-k}{k}\\
&=(-1)^{l+m}\sum_{k=0}^{l+m}\binom{l}{k}\binom{-l-1}{l+m-k}
=(-1)^{l+m}\binom{-1}{l+m}=\binom{l+m}{l+m}=1
\end{align*}
by the Chu-Vandermonde identity. Next, we prove the second equality. 
Let $\binom{n}{a,b,c}:=n!/(a!b!c!)$. 
Since
\[
(1+Y)^{4l+2m}= (1+2Y+Y^2)^{2l+m}=\sum_{\substack{a+b+c=2l+m\\a,b,c\ge0}}\binom{2l+m}{a,b,c}(2Y)^bY^{2c}
\]
holds, by comparing the coefficient of $Y^{2l}$, we have
\[
\binom{4l+2m}{2l}=\sum_{j=0}^l\binom{2l+m}{j+m,2l-2j,j}2^{2l-2j}=\sum_{k=0}^{l} 4^{k} \binom{2l+m-2k}{l-k}\binom{2l+m}{2k}.
\]
This concludes the proof.
\end{proof}
\begin{lem} \label{Yam-thm}
For non-negative integers $l$ and $m$, we have
\begin{align*}
&\zeta_{\mathcal{A}_2}^{\star}((\{1,3\}^l)\sha (\{2\}^m))\\
&=\sum_{\substack{2i+k+u=2l \\ j+n+v=m}}(-1)^{j+k}\binom{k+n}{k}\binom{u+v}{u}\zeta_{\mathcal{A}_2}((\{1,3\}^i)\sha (\{2\}^j))\zeta_{\mathcal{A}_2}^{\star}(\{2\}^{k+n})\zeta_{\mathcal{A}_2}^{\star}(\{2\}^{u+v}),
\end{align*}
where parameters $i, j, k, n, u, v$ are non-negative integers.
\end{lem}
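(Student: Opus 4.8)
The plan is to strip away everything special to $\mathcal{A}_2$ and reduce the statement to a single combinatorial identity in the harmonic (stuffle) algebra $(\mathfrak{R},*)$. The crucial observation is that the two operations linking the two sides are \emph{exact} congruences modulo $p^2$, with no $\boldsymbol{p}$-correction terms, in sharp contrast to the shuffle relation of Lemma~\ref{shuffle}: the harmonic product $\zeta_{\mathcal{A}_2}(a)\zeta_{\mathcal{A}_2}(b)=\zeta_{\mathcal{A}_2}(a*b)$, which merely multiplies the two finite sums, and the star-to-nonstar conversion, obtained by splitting each ``$\le$'' in the definition of $\zeta^{\star}_{\mathcal{A}_2}$ into ``$<$'' or ``$=$'' (the latter adding two adjacent arguments). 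Writing $S\colon\mathfrak{R}\to\mathfrak{R}$ for the resulting linear ``star'' map, so that $\zeta^{\star}_{\mathcal{A}_2}=\zeta_{\mathcal{A}_2}\circ S$, the lemma follows by applying $\zeta_{\mathcal{A}_2}$ to the formal identity
\begin{multline*}
S\bigl((\{1,3\}^l)\sha(\{2\}^m)\bigr)
=\sum_{\substack{2i+k+u=2l\\ j+n+v=m}}(-1)^{j+k}\binom{k+n}{k}\binom{u+v}{u} \\
\times\bigl((\{1,3\}^i)\sha(\{2\}^j)\bigr)*S(\{2\}^{k+n})*S(\{2\}^{u+v}).
\end{multline*}
Indeed, under $\zeta_{\mathcal{A}_2}$ the core factor becomes $\zeta_{\mathcal{A}_2}((\{1,3\}^i)\sha(\{2\}^j))$ while the two end factors become $\zeta^{\star}_{\mathcal{A}_2}(\{2\}^{k+n})$ and $\zeta^{\star}_{\mathcal{A}_2}(\{2\}^{u+v})$, exactly as required. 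This reduction is the step at which the passage from genuine MZ(S)Vs (or from $\mathcal{A}_1$) to $\mathcal{A}_2$ introduces no new difficulty, since both operations used are exact in every $\mathcal{A}_n$.

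I would prove the formal identity by induction on $l$, peeling off the outermost $1,3$ block of the string $1,3,\dots,1,3$ and tracking how a run of $2$'s at each end behaves under $S$ and under the shuffle $\sha$ with the $m$ genuine $2$'s. The two binomial coefficients are shuffle-interleaving counts: since all letters are equal, $(\{2\}^k)\sha(\{2\}^n)=\binom{k+n}{k}(\{2\}^{k+n})$, so $\binom{k+n}{k}$ (resp.\ $\binom{u+v}{u}$) records the ways the $n$ (resp.\ $v$) genuine $2$'s interleave with the $k$ (resp.\ $u$) $2$'s produced at the left (resp.\ right) boundary when an endpoint of the $1,3$-string is absorbed, while the sign $(-1)^{j+k}$ collects the signs inherent in $S$. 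The accounting is cleanest through generating functions in two variables recording $l$ and $m$, where $\binom{k+n}{k}$ is read off from a geometric-series factor and the alternating sign from the star generating series of the $\{2\}$-strings; the constraints $2i+k+u=2l$ and $j+n+v=m$ simply encode conservation of the $4l+2m$ letters.

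The heart of the matter, and the step I expect to be the main obstacle, is showing that after applying $S$ every contribution factors \emph{cleanly} as a shorter Bowman--Bradley core $(\{1,3\}^i)\sha(\{2\}^j)$ times two detached $2$-strings, with precisely the stated weights and with no residual entanglement between the core and the boundaries. A reassuring special case that already exhibits the mechanism is $l=1$, $m=0$: there $S(1,3)=(1,3)+(4)$, and the three boundary terms indexed by $(i,k,u)=(0,2,0),(0,0,2),(0,1,1)$ contribute $S(2,2)+S(2,2)-(2)*(2)=(4)$ once the stuffle product $(2)*(2)=2(2,2)+(4)$ is used, so the right-hand side collapses to $(1,3)+(4)$, matching the left-hand side. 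Once the formal identity is established, applying $\zeta_{\mathcal{A}_2}$ and invoking the exactness noted above completes the proof; one may further observe that, after this application, every term with two nonempty end-strings vanishes because $\boldsymbol{p}^2=0$ in $\mathcal{A}_2$, although this simplification is not needed for the argument itself.
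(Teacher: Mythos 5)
Your reduction is the right one, and it is in fact exactly what the paper's (one-line) proof amounts to: since $\zeta_{\mathcal{A}_2}$ is a homomorphism for the harmonic (stuffle) product --- an exact finite-sum identity, valid in every $\mathcal{A}_n$ with no $\boldsymbol{p}$-corrections --- and since $\zeta^{\star}_{\mathcal{A}_2}=\zeta_{\mathcal{A}_2}\circ S$ for the merge-adjacent-entries map $S$, the lemma follows from a purely formal identity in the harmonic algebra. That formal identity is precisely Yamamoto's Theorem 2.1 (the paper's cited source \cite{Yam13}), which Yamamoto establishes at the level of the Hoffman algebra so that it specializes to any stuffle-compatible evaluation; the authors simply apply it to $\zeta_{\mathcal{A}_2}$. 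Your observation that no new difficulty arises in passing from $\mathcal{A}_1$ to $\mathcal{A}_2$ at this step is also correct and is the reason the citation suffices.

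The gap is that the formal identity itself --- which carries essentially all of the mathematical content of the lemma --- is not proved. You verify only the case $l=1$, $m=0$ and then describe two possible strategies (induction on $l$ peeling off a $1,3$ block; generating functions in two variables), while explicitly flagging the key step, namely that after applying $S$ every contribution factors cleanly as a shorter core $(\{1,3\}^i)\sha(\{2\}^j)$ stuffled with two detached $2$-strings with the stated weights and signs, as ``the main obstacle.'' That step is genuinely nontrivial: $S$ applied to a shuffle of $\{1,3\}^l$ with $\{2\}^m$ merges entries \emph{across} the boundary between the core and the end-strings (e.g.\ a trailing $1$ of the core merging with an adjacent $2$, or a $3$ with a $1$), so the clean factorization and the provenance of the sign $(-1)^{j+k}$ --- which does not come from $S$ itself, since $S$ has no signs, but from an inclusion-exclusion/antipode-type rearrangement --- require a real argument. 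As written, the proposal establishes the (easy) transfer principle but not the identity being transferred; to complete it you would either have to carry out the claimed induction in full or, as the paper does, invoke Yamamoto's theorem.
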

\begin{proof}
This follows from \cite[Theorem  2.1]{Yam13}.
\end{proof}
%%%%%%%%%%%%%%%%%%%%%%%%%%%%%%%%%%%%%%%%%%%%%%%%%%%%%%%%%%%%%%%%%%%%%%%%%%%%%%%%%%%%%%%%%%%%%%%%%%%%%%
\section{Proof of the main theorem} \label{sec:Pr}
\begin{proof}[Proof of Theorem \ref{main}]
First, we prove \eqref{MT1} by induction on $l$. 
We see that the case $l=0$ holds by Lemma \ref{kkk} \eqref{ZC}. 
For the general case, let $l$ be a positive integer and $m$ a non-negative integer.
By Lemma \ref{muneta}, we have
\begin{multline*}
\zeta_{\mathcal{A}_2} ((\{1,3\}^l)\sha (\{2\}^m)) \\
=4^{-l}\zeta_{\mathcal{A}_2} ((\{2\}^{l+m})\sh (\{2\}^{l})) -\sum_{k=0}^{l-1}4^{k-l} \binom{2l+m-2k}{l-k} \zeta_{\mathcal{A}_2} ((\{1,3\}^k) \sha (\{2\}^{2l+m-2k})). 
\end{multline*} 
Hence, by Lemma \ref{aaa} and the induction hypothesis, we have
\begin{align*}
&\zeta_{\mathcal{A}_2} ((\{1,3\}^l)\sha (\{2\}^m)) \\
&=(-1)^m 2^{1-2l}\left\{1-2\binom{4l+2m}{2l}\right\}
\beta_{4l+2m+1} \boldsymbol{p} \\
&\quad -\sum_{k=0}^{l-1} 4^{k-l}\binom{2l+m-2k}{l-k} \cdot (-1)^m
\biggl\{(-1)^k2^{1-2k}\binom{2l+m-k}{k} -4\binom{2 l+m}{2k} \biggr\}
\beta_{4l+2m+1} \boldsymbol{p}.
\end{align*}
By Lemma \ref{vandermonde}, we can simplify as 
\begin{align*}
\zeta_{\mathcal{A}_2} ((\{1,3\}^l)\sha (\{2\}^m))
&=(-1)^m2^{1-2l}\left\{1-2\binom{4l+2m}{2l}\right\}
\beta_{4l+2m+1} \boldsymbol{p} \\
&\quad-(-1)^{m}2^{1-2l}\left\{1-(-1)^{l}\binom{l+m}{l}\right\}\beta_{4l+2m+1} \boldsymbol{p}\\
&\quad+(-1)^m4^{1-l}\left\{\binom{4l+2m}{2l}-4^{l}\binom{2l+m}{2l}\right\}
\beta_{4l+2m+1} \boldsymbol{p} \\
&=(-1)^m\biggl\{(-1)^l2^{1-2l}\binom{l+m}{l}-4\binom{2 l+m}{2l}\biggr\} \beta_{4l+2m+1} \boldsymbol{p}.
\end{align*} 
Next, we prove \eqref{MT2}. By the equality \eqref{MT1} and Lemma \ref{kkk} \eqref{ZC-star}, many terms in the right-hand side of the equality in Lemma \ref{Yam-thm} vanish and we have
\begin{align*}
&\zeta_{\mathcal{A}_2}^{\star} ((\{1,3\}^l)\sha (\{2\}^m)) \\
&=(-1)^m\zeta_{\mathcal{A}_2}((\{1,3\}^l)\sha (\{2\}^m)) + 2\binom{2l+m}{2l}\zeta_{\mathcal{A}_2}^{\star}(\{2\}^{2l+m})\\
&=(-1)^l2^{1-2l}\binom{l+m}{l} \beta_{4l+2m+1}\boldsymbol{p}.
\end{align*}
This finishes the proof.
\end{proof}
%%%%%%%%%%%%%%%%%%%%%%%%%%%%%%%%%%%%%%%%%%%%%%%%%%%%%%%%%%%%%%%%%%%%%%%%%%%%%%%%%%%%%%%%%%%%%%%%%%%%%%%%%
\section*{Acknowledgements}
The authors would like to thank Doctor Hisatoshi Kodani for valuable comments. They also would like to express their gratitude to the anonymous referee for useful suggestions.
%%%%%%%%%%%%%%%%%%%%%%%%%%%%%%%%%%%%%%%%%%%%%%%%%%%%%%%%%%%%%%%%%%%%%%%%%%%%%%%%%%%%%%%%%%%%%%%%%%%%%%%%%

\end{document}